\newcommand{\C}{{\mathbb C}}
\newcommand{\Z}{{\mathbb Z}}
\newcommand{\Q}{{\mathbb Q}}
\newcommand{\arrow}[1]{\stackrel{#1}{\longrightarrow}}
\newcommand{\Adot}{\mathbf A^\bullet}
\newcommand{\Bdot}{\mathbf B^\bullet}
\newcommand{\vdual}{{\mathcal D}}
\newcommand{\var}{\mathbf{var}}
\newcommand{\can}{\mathbf{can}}
\newtheorem{defn0}{Definition}[section]
\newtheorem{prop0}[defn0]{Proposition}
\newtheorem{conj0}[defn0]{Conjecture}
\newtheorem{thm0}[defn0]{Theorem}
\newtheorem{lem0}[defn0]{Lemma}
\newtheorem{corollary0}[defn0]{Corollary}
\newtheorem{example0}[defn0]{Example}
\newtheorem{remark0}[defn0]{Remark}
\newtheorem{question0}[defn0]{Question}
\newtheorem{exercise0}[defn0]{Exercise}
\newenvironment{thm}{\begin{thm0}}{\end{thm0}}
\newenvironment{lem}{\begin{lem0}}{\end{lem0}}
\newenvironment{rem}{\begin{remark0}\rm}{\end{remark0}}
\newcommand{\thmref}[1]{Theorem~\ref{#1}}
\newcommand{\lemref}[1]{Lemma~\ref{#1}}
\newcommand{\remref}[1]{Remark~\ref{#1}}
\title{Verdier dualizing and the variation map}
\subjclass[2010]{32S25, 32S15, 32S55}
\author{David B. Massey}
\date{}
\begin{document}

\begin{abstract} We describe how the canonical and variation maps between the shifted nearby and vanishing cycles interact with Verdier dualizing.
\end{abstract}

\maketitle

%\newpage

%\tableofcontents

%\newpage

\section{Introduction} 

Let $X$ be a reduced complex analytic space and let $f:X\rightarrow \C$ be a nowhere locally constant analytic function. Let $j: V(f)\hookrightarrow X$ and $i: X-V(f)\hookrightarrow X$ denote the inclusions. (We initially learned about the derived category from Goresky and MacPherson's ``Intersection Homology II'', \cite{inthom2}. The convention of $j$ being a closed inclusion, while $i$ is an open inclusion -- which is reversed from what many people use -- is theirs.)

\medskip

We fix a base ring, $R$, which is a commutative, regular, Noetherian ring, with finite Krull dimension (e.g., $\Z$, $\Q$, or $\C$). When we write that $\Adot$ is complex of sheaves on a topological space, $X$, we mean that $\Adot$ is an object in $D^b(X)$, the derived category of bounded complexes of sheaves of $R$-modules on $X$. When $X$ is complex analytic, we may also require that $\Adot$ is (complex) constructible, and write $\Adot\in D^b_c(X)$. We remind the reader that constructibility includes the assumption that the stalks of all cohomology sheaves are finitely-generated $R$-modules (so that, by our assumption on $R$, each stalk complex $\Adot_x$, for $x\in X$, is perfect, i.e., is quasi-isomorphic to a bounded complex of finitely-generated projective $R$-modules).

\smallskip

Let $\vdual=\vdual_Z$ denote the Verdier dualizing operator on $D^b_c(Z)$. We will always write simply $\vdual$, since the relevant space will always be clear. 

\smallskip

As we will always work in the derived category, we omit the $R$ at the beginning of right derived functors; all of our functors are derived. We also write $f^*$ where some authors would write $f^{-1}$.

\smallskip

We define the vanishing cycles as in Lemma 1.3.2 of \cite{schurbook}, or following Exercise VIII.13 of \cite{kashsch} (but reversing the inequality, and using a different shift); this, of course, means that by definition, the stalk cohomology of vanishing cycles is the hypercohomology of a small ball modulo the Milnor fiber.

\smallskip

We use $T_f$ and $\widetilde{T}_f$ for the monodromy natural automorphisms on the shifted nearby and vanishing cycles $\psi_f[-1]$ and $\phi_f[-1]$, respectively. We also have the standard {\it canonical} and {\it variation} natural transformations, $\can$ and $\var$:
$$\psi_f[-1]\xrightarrow{\can} \phi_f[-1]\hskip 0.2in\textnormal{ and }\hskip 0.2in \phi_f[-1]\xrightarrow{\var}\psi_f[-1],$$
where 
$$
\big({\operatorname{id}}-T_f\big)_{\Adot} = \var_{\Adot}\circ\can_{\Adot}\hskip 0.2in \textnormal{ and }\hskip 0.2in\big({\operatorname{id}}-\widetilde T_f\big)_{\Adot} = \can_{\Adot}\circ\var_{\Adot}.
$$

\vskip 0.2in

In \cite{natcommute}, we produced a natural isomorphism
\begin{equation}\label{eq:vandual}
\phi_f[-1]\circ\vdual \xrightarrow[\cong]{\epsilon}\vdual\circ\phi_f[-1].
\end{equation} 
We then used the natural isomorphism $\epsilon$ to produce a natrual isomorphism
\begin{equation}\label{eq:neardual}
\psi_f[-1]\circ\vdual \xrightarrow[\cong]{\delta}\vdual\circ\psi_f[-1].\end{equation}

However, something which we did {\bf not} do in \cite{natcommute} was discuss/prove in what sense $\can$ and $\var$ are dual to each other. That is the point of this paper.

\vskip 0.2in 

We prove:

\begin{thm}\label{thm:mainintro} The following diagram commutes:
$$
\begin{CD}
\psi_f[-1]\vdual\Adot@>\can_{\vdual\Adot}>>\phi_f[-1]\vdual\Adot\\
@VV\delta_{\Adot}V  @VV\epsilon_{\Adot}V\\
\vdual\psi_f[-1]\Adot @>\vdual(\var_{\Adot})>>\vdual\phi_f[-1]\Adot.
\end{CD}
$$
\end{thm}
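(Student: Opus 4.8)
The plan is to recognize both horizontal maps in the square as arrows belonging to the two standard distinguished triangles attached to $f$, and then to invoke the precise way $\delta$ was manufactured from $\epsilon$ through exactly those triangles. Recall that for every $\Adot\in D^b_c(X)$ there are natural distinguished triangles
$$j^*\Adot[-1]\longrightarrow\psi_f[-1]\Adot\xrightarrow{\ \can_{\Adot}\ }\phi_f[-1]\Adot\xrightarrow{\ +1\ }\qquad\text{and}\qquad\phi_f[-1]\Adot\xrightarrow{\ \var_{\Adot}\ }\psi_f[-1]\Adot\longrightarrow j^!\Adot[1]\xrightarrow{\ +1\ },$$
and that $\vdual j^*\cong j^!\vdual$, so $\vdual\big(j^!\Adot[1]\big)\cong j^*\vdual\Adot[-1]$. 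Thus, applying $\vdual$ to the second triangle at $\Adot$ and writing the first triangle at $\vdual\Adot$, one obtains two distinguished triangles in $D^b_c(V(f))$ sharing the left-hand term $j^*\vdual\Adot[-1]$; and the content of \thmref{thm:mainintro} is precisely that $\big(\operatorname{id},\ \delta_{\Adot},\ \epsilon_{\Adot}\big)$ is a morphism between these two triangles.

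I would then recall from \cite{natcommute} the exact recipe by which $\delta$ is produced from $\epsilon$: one of these two triangles is Verdier-dualized, $\epsilon$ is fed in on the vanishing-cycle term, and $\delta$ is read off on the nearby-cycle term via the completion axiom, normalized to restrict to the identity on the $j^*$/$j^!$-term. That recipe delivers, essentially by construction, one of the two ``duality squares'': either the square $\epsilon_{\Adot}\circ\can_{\vdual\Adot}=\vdual(\var_{\Adot})\circ\delta_{\Adot}$ appearing in the theorem, or its transpose $\delta_{\Adot}\circ\var_{\vdual\Adot}=\vdual(\can_{\Adot})\circ\epsilon_{\Adot}$. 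If it is the former, the argument concludes once the shifts, signs, and the chosen isomorphism $\vdual j^*\cong j^!\vdual$ are matched with those implicit in $\epsilon$ and $\delta$ — bookkeeping rather than substance.

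If instead the square that comes for free is the transposed one, the real work is to pass from it to the theorem's square, and this is where I expect the main obstacle. Since $\delta_{\Adot}$ and $\epsilon_{\Adot}$ are isomorphisms, the transpose rewrites as $\vdual(\can_{\Adot})=\delta_{\Adot}\circ\var_{\vdual\Adot}\circ\epsilon_{\Adot}^{-1}$; applying $\vdual$, substituting $\vdual\Adot$ for $\Adot$, and inserting $\vdual\vdual\cong\operatorname{id}$ should convert this into the asserted identity — but only after one controls how $\delta$ and $\epsilon$ themselves interact with biduality, i.e.\ coherences of the form $\vdual(\delta_{\Adot})\cong\delta_{\vdual\Adot}^{-1}$ and $\vdual(\epsilon_{\Adot})\cong\epsilon_{\vdual\Adot}^{-1}$ (relative to the canonical $\vdual\vdual\cong\operatorname{id}$), together with $\vdual\vdual(\can_{\Adot})\cong\can_{\Adot}$. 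Establishing these coherences — by returning to the construction of $\epsilon$ in \cite{natcommute} — is the crux; it cannot be bypassed by a uniqueness-of-fill-in argument, because the relevant $\operatorname{Hom}$-groups, namely $\operatorname{Hom}(j^*\vdual\Adot,\ \vdual\phi_f[-1]\Adot)$ and $\operatorname{Hom}(\vdual\phi_f[-1]\Adot,\ j^*\vdual\Adot[-1])$, need not vanish (already for $f$ with an isolated critical point and $\Adot$ the constant sheaf). The octahedral axiom applied to the composite $j^*\Adot[-1]\to\psi_f[-1]\Adot$ and the monodromy, which links both triangles to the relation $\var_{\Adot}\circ\can_{\Adot}=\operatorname{id}-T_f$, is a useful device for nailing down the signs along the way.
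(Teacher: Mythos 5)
Your proposal correctly locates the two horizontal arrows inside the standard distinguished triangles and correctly senses that everything must reduce to how $\delta$ was manufactured from $\epsilon$; but as written it is conditional rather than a proof, and the branch you fall back on rests on a false identity. You say the construction of $\delta$ delivers ``one of the two duality squares,'' either the one in the theorem or its transpose $\delta_{\Adot}\circ\var_{\vdual\Adot}=\vdual(\can_{\Adot})\circ\epsilon_{\Adot}$, and you propose, if necessary, to derive the theorem from the transpose via biduality coherences. Section 4 of the paper is devoted to showing that this transposed square does \emph{not} commute with $\delta$ (its commutation would force $\widetilde T_f^{-1}=\widetilde T_f$); it commutes only after replacing $\delta$ by a different isomorphism $\hat\delta$. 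So that contingency cannot be salvaged, and your first branch (``if it is the former, only bookkeeping remains'') is an assertion, not an argument.

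What is missing is the actual mechanism by which $\delta$ is defined and exploited --- and it is not the completion axiom (you are right that a fill-in/uniqueness argument is unavailable here, since the relevant $\operatorname{Hom}$-groups need not vanish). The paper's route: since $\psi_f[-1]$ factors through restriction to $X-V(f)$, the maps $\can_{i_!i^!\Adot}$ and $\var_{i_*i^*\Adot}$ are isomorphisms (the third terms $j_*j^*[-1]i_!i^!$ and $j_!j^![1]i_*i^*$ vanish), giving isomorphisms $\sigma\colon\psi_f[-1]\to\phi_f[-1]i_!i^!$ and $\tau\colon\phi_f[-1]i_*i^*\to\psi_f[-1]$; naturality of $\can$ and $\var$ then yields the factorizations $\can_{\Adot}=\phi_f[-1](\beta_{\Adot})\circ\sigma_{\Adot}$ and $\var_{\Adot}=\tau_{\Adot}\circ\phi_f[-1](\alpha_{\Adot})$ through the adjunction unit and counit (\lemref{lem:2squares}). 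The isomorphism $\delta$ is \emph{defined} as the explicit composite $\vdual(\tau^{-1}_{\Adot})\circ\epsilon_{i_*i^*\Adot}\circ\phi_f[-1](\gamma^{-1}_{\Adot})\circ\sigma_{\vdual\Adot}$, and the theorem then reduces to the compatibility $\beta_{\vdual\Adot}=\vdual(\alpha_{\Adot})\circ\gamma_{\Adot}^{-1}$ of \lemref{lem:abd} together with naturality of $\epsilon$. None of these ingredients --- $\sigma$, $\tau$, $\gamma$, or the duality between $\alpha$ and $\beta$ --- appears in your sketch, and they are the substance of the proof, not bookkeeping.
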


\vskip 0.2in 

However, one must be careful now. It is tempting to assume that one can also show, in an analogous fashion, that
\begin{equation}\label{eq:noncomm}
\begin{CD}
\phi_f[-1]\vdual\Adot @>\var_{\vdual\Adot}>>\psi_f[-1]\vdual\Adot\\
 @VV\epsilon_{\Adot}V  @VV\delta_{\Adot}V\\
\vdual\phi_f[-1]\Adot @>\vdual(\can_{\Adot})>>\vdual\psi_f[-1]\Adot 
\end{CD}
\end{equation}

\medskip

\noindent commutes; however, {\bf this is false}. To make the diagram above commute, one must replace the natural isomorphism $\delta$ with a different natural isomorphism $\hat\delta$.

\vskip 0.3in

This paper is organized as follows:

\medskip

\noindent

In Section 2, we prove a short, technical, lemma that we shall need; we suspect that this lemma is known, but we can find no reference. In Section 3, we prove the main theorem,  \thmref{thm:main}, (as stated above in \thmref{thm:mainintro}). In Section 4, we discuss the failure of the commutation of \eqref{eq:noncomm} and show how to define $\hat\delta$ to ``fix'' this issue. 

\medskip

\section{A Basic Lemma}

In this section,  we will prove a general lemma, and then apply it in later sections to the situation of the introduction. 

\bigskip

Let $k: Y\rightarrow Z$ be a continuous map of locally compact topological spaces such that $k_!$ has finite cohomological dimension (e.g., $k$ is the inclusion of a locally closed subset).  

\bigskip

For all $\Bdot\in D^b(Y)$ and $\Adot\in D^b(Z)$ (note that we are not assuming constructibility here), there are canonical isomorphisms of morphism groups
$$
\operatorname{Hom}(k^*\Adot, \Bdot)\xrightarrow[\cong]{M_{\Adot, \Bdot}}\operatorname{Hom}(\Adot, k_*\Bdot)
$$
and
$$
\operatorname{Hom}(\Bdot, k^!\Adot)\xrightarrow[\cong]{N_{\Adot, \Bdot}}\operatorname{Hom}(k_!\Bdot, \Adot);
$$
see \cite{kashsch}, Proposition II.2.6.4 and Theorem III.3.1.5. Replacing $\Bdot$ with $k^*\Adot$ in the first isomorphism and $\Bdot$ with $k^!\Adot$ (which, of course, equals $k^*\Adot$), we obtain isomorphisms
$$
\operatorname{Hom}(k^*\Adot, k^*\Adot)\xrightarrow[\cong]{M_{\Adot}}\operatorname{Hom}(\Adot, k_*k^*\Adot)
$$
and
$$
\operatorname{Hom}(k^!\Adot, k^!\Adot)\xrightarrow[\cong]{N_{\Adot}}\operatorname{Hom}(k_!k^!\Adot, \Adot).
$$

\smallskip

The images of the identity morphisms yield the standard natural transformations
$$
\operatorname{id}\arrow{\alpha} k_*k^*\hskip 0.2in \textnormal{ and }\hskip 0.2in   k_!k^!\arrow{\beta} \operatorname{id},
$$
that is, $\alpha_{\Adot}=M_{\Adot}(\operatorname{id}_{k^*\Adot})$ and $\beta_{\Adot}=N_{\Adot}(\operatorname{id}_{k^!\Adot})$.

\bigskip

We return to the situation and notation of the introduction. In the lemma below, we need constructibility of the complexes. Note that $i_*$ and $i_!$ are applied only to $i^*$ and $i^!$ of constructible complexes on $X$ and so yield constructible complexes.

\smallskip

\begin{lem}\label{lem:abd} There is a natural isomorphism $$\vdual i_*i^*\xrightarrow[\cong]{\gamma} i_!i^!\vdual$$ such that, for all $\Adot\in D^b_c(X)$,
$$
\beta_{\vdual\Adot}=\vdual(\alpha_{\Adot})\circ \gamma_{\Adot}^{-1}.
$$

\medskip
\end{lem}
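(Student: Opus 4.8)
The plan is to build the isomorphism $\gamma$ out of the standard interchange of Verdier duality with direct/inverse images, and then verify the compatibility with $\alpha$ and $\beta$ by unwinding the adjunction isomorphisms $M$ and $N$. Recall the canonical isomorphisms $\vdual i^* \cong i^! \vdual$ and $\vdual i_* \cong i_! \vdual$ (the latter on $i^*$ of a constructible complex, where $i_*$ and $i_!$ agree with their non-derived sense only in the sheaf-theoretic bookkeeping — what matters is the duality exchange, valid here because $i$ is an open inclusion and all complexes in sight are constructible). Composing, one gets
\[
\vdual i_* i^* \;\cong\; i_! \vdual i^* \;\cong\; i_! i^! \vdual,
\]
and this composite is the candidate for $\gamma$. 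The first isomorphism is the base-change/duality statement $\vdual \circ i_* \cong i_! \circ \vdual$ evaluated at $i^*\Adot$, and the second is $i_! $ applied to $\vdual \circ i^* \cong i^! \circ \vdual$ at $\Adot$. Each is natural, so $\gamma$ is a natural isomorphism $\vdual i_* i^* \Rightarrow i_! i^! \vdual$.

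Next I would check the identity $\beta_{\vdual\Adot} = \vdual(\alpha_\Adot) \circ \gamma_\Adot^{-1}$. The clean way is to trace the definitions: $\alpha_\Adot = M_\Adot(\operatorname{id}_{i^*\Adot})$ and $\beta_{\vdual\Adot} = N_{\vdual\Adot}(\operatorname{id}_{i^!\vdual\Adot})$. The key observation is that Verdier duality exchanges the two adjunctions: the adjunction $(i^*, i_*)$ realized by $M$ is taken by $\vdual$ to the adjunction $(i_!, i^!)$ realized by $N$. Concretely, $\vdual$ induces an isomorphism $\operatorname{Hom}(i^*\Adot, i^*\Adot) \cong \operatorname{Hom}(\vdual i^*\Adot, \vdual i^*\Adot)$, and under the identifications $\vdual i^*\Adot \cong i^!\vdual\Adot$ this sends $\operatorname{id}_{i^*\Adot}$ to $\operatorname{id}_{i^!\vdual\Adot}$. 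Applying $\vdual$ to $\alpha_\Adot: \Adot \to i_*i^*\Adot$ gives a morphism $\vdual i_* i^*\Adot \to \vdual\Adot$, which after precomposing with $\gamma_\Adot^{-1}$ becomes a morphism $i_!i^!\vdual\Adot \to \vdual\Adot$; I must show this equals $\beta_{\vdual\Adot}$. This reduces to the compatibility of the duality isomorphisms with the unit/counit morphisms, i.e., that the square relating $M$ and $N$ under $\vdual$ commutes. The cleanest formulation is: for any $\Bdot$, the diagram
\[
\begin{CD}
\operatorname{Hom}(i^*\Adot, \Bdot) @>M_{\Adot,\Bdot}>> \operatorname{Hom}(\Adot, i_*\Bdot) \\
@VV\vdual V @VV\vdual V \\
\operatorname{Hom}(\vdual\Bdot, i^!\vdual\Adot) @>N_{\vdual\Adot, \vdual\Bdot}>> \operatorname{Hom}(i_!\vdual\Bdot, \vdual\Adot)
\end{CD}
\]
commutes, where the left vertical map uses $\vdual i^*\Adot \cong i^!\vdual\Adot$ and the right uses $\vdual i_*\Bdot \cong i_!\vdual\Bdot$. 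Taking $\Bdot = i^*\Adot$ and chasing $\operatorname{id}_{i^*\Adot}$ around this square yields exactly the asserted identity once one checks that the left vertical arrow sends $\operatorname{id}$ to $\operatorname{id}$ under the chosen identification — which is precisely the requirement pinning down the normalization of $\gamma$.

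The main obstacle is precisely the commutativity of that last square, i.e., the statement that the two biduality/adjunction isomorphisms $M$ and $N$ are interchanged by $\vdual$ in a way compatible with the canonical isomorphisms $\vdual i^* \cong i^! \vdual$ and $\vdual i_* \cong i_! \vdual$. This is the kind of coherence statement that is ``well-known'' but for which explicit references are awkward — and is, I expect, exactly why the paper proves this lemma from scratch. I would handle it by reducing everything to the level of the adjunction $(i_!, i^!)$ and its unit $\beta$: using the defining property that $\vdual (i_! \Bdot) \cong i_* \vdual \Bdot$ and $\vdual(i^! \Adot) \cong i^* \vdual \Adot$ come from the same biduality data, so that $\vdual$ carries $\beta: i_!i^! \Rightarrow \operatorname{id}$ to (a conjugate of) $\alpha: \operatorname{id} \Rightarrow i_*i^*$. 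A slightly different, perhaps safer route is to define $\gamma$ not by the composite above but \emph{directly} by the requirement $\gamma_\Adot = \beta_{\vdual\Adot}^{-1}$-style formula — no, more precisely: observe that $\vdual(\alpha_\Adot)$ is an isomorphism onto its image in the appropriate triangulated sense is false in general, so one really does need the composite-of-canonical-isomorphisms definition and must bite the bullet on the coherence check. I would organize the bullet-biting as a single large but formal diagram chase, invoking the functoriality of $M$ and $N$ in both variables (from \cite{kashsch}) and the fact that the Verdier duality interchange isomorphisms are themselves natural and compatible with composition of the morphisms $i^*, i_*, i_!, i^!$.
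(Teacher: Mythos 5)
Your construction of $\gamma$ as the composite $\vdual i_*i^*\cong i_!\vdual i^*\cong i_!i^!\vdual$ is fine, and you have correctly located where the real content of the lemma lives: not in the existence of $\gamma$ (which, as the paper's own remark notes, is standard), but in the compatibility $\beta_{\vdual\Adot}=\vdual(\alpha_{\Adot})\circ\gamma_{\Adot}^{-1}$. The problem is that you never actually discharge that step. You reduce it to the commutativity of the square interchanging $M_{\Adot,\Bdot}$ and $N_{\vdual\Adot,\vdual\Bdot}$ under $\vdual$, declare this to be the ``main obstacle,'' and then defer it to ``a single large but formal diagram chase'' whose execution depends on how the two exchange isomorphisms $\vdual i^*\cong i^!\vdual$ and $\vdual i_*\cong i_!\vdual$ were normalized in the first place --- a point you do not pin down (and your aside about $\vdual(\alpha_{\Adot})$ being ``an isomorphism onto its image'' suggests you were not sure how to). Since the lemma \emph{is} that coherence statement, the proposal as written does not prove it.

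The paper avoids this trap by reversing the logical order: rather than fixing $\gamma$ in advance and then checking compatibility, it assembles the chain of natural-in-$\Bdot$ isomorphisms
$\operatorname{Hom}(\vdual i_*i^*\Adot,\vdual\Bdot)\cong\operatorname{Hom}(\Bdot,i_*i^*\Adot)\cong\operatorname{Hom}(i^*\Bdot,i^*\Adot)\cong\operatorname{Hom}(\vdual i^*\Adot,\vdual i^*\Bdot)\cong\operatorname{Hom}(i^!\vdual\Adot,i^!\vdual\Bdot)\cong\operatorname{Hom}(i_!i^!\vdual\Adot,\vdual\Bdot)$
and \emph{defines} $\gamma_{\Adot}$ by Yoneda as the morphism inducing this composite. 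With that definition, setting $\Bdot=\Adot$ and chasing $\vdual(\alpha_{\Adot})$ backward to $\alpha_{\Adot}=M_{\Adot}(\operatorname{id})$, then to $\operatorname{id}_{i^*\Adot}$, then to $\operatorname{id}_{i^!\vdual\Adot}$, then to $N_{\vdual\Adot}(\operatorname{id})=\beta_{\vdual\Adot}$, gives the identity with no further coherence input: each link in the chain visibly carries the identity morphism to the identity morphism. If you want to salvage your version, you should either adopt this Yoneda-style definition of $\gamma$, or else explicitly verify that your composite of exchange isomorphisms induces the same map on $\operatorname{Hom}$ groups as the chain above --- which is exactly the diagram chase you postponed.
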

\begin{proof} Suppose that we have  $\Adot, \Bdot\in D_c^b(X)$.
Then, we have natural isomorphisms

$$
\operatorname{Hom}(\vdual i_*i^*\Adot, \vdual \Bdot)\cong \operatorname{Hom}(\Bdot, i_*i^*\Adot, )\cong  \operatorname{Hom}(i^*\Bdot, i^*\Adot)\cong \operatorname{Hom}(\vdual i^*\Adot, \vdual i^*\Bdot)\cong
$$
$$
 \operatorname{Hom}(i^!\vdual \Adot, i^!\vdual \Bdot)\cong \operatorname{Hom}(i_!i^!\vdual\Adot, \vdual \Bdot),
$$

\medskip

\noindent where the isomorphisms, in order, follow from  Proposition III.3.4.6 of \cite{kashsch}, Proposition II.2.6.4 of  \cite{kashsch}, Proposition III.3.4.6 of \cite{kashsch}, Theorem V.10.11 of \cite{boreletal}, and Proposition III.3.1.10  of \cite{kashsch}. As this is true for all $\Bdot$, it follows that there is a natural isomorphism  $$\vdual i_*i^*\xrightarrow[\cong]{\gamma} i_!i^!\vdual$$ inducing the isomorphism from $\operatorname{Hom}(\vdual i_*i^*\Adot, \vdual \Bdot)$ to $\operatorname{Hom}(i_!i^!\vdual\Adot, \vdual \Bdot)$.

Replacing $\Bdot$ with $\Adot$, tracing through the $\operatorname{Hom}$ isomorphisms, and using the definitions of $\alpha$ and $\beta$, one obtains immediately that 
$$
\beta_{\vdual\Adot}=\vdual(\alpha_{\Adot})\circ \gamma_{\Adot}^{-1}.
$$
\end{proof}

\medskip

\begin{rem} It is easy to find the existence of a natural isomorphism $\vdual i_*i^*\xrightarrow[\cong]{\gamma} i_!i^!\vdual$ in the literature. The point of the lemma above is how $\gamma$ interacts with Verdier dualizing.
\end{rem}

\medskip

\section{The Main Theorem}

In this section, we will prove our main theorem as stated in \thmref{thm:mainintro}. It may seem as though we define an excessive number of maps, but we have found no simpler presentation. We continue with our notation from the introduction.

\bigskip

Consider the two standard natural distinguished triangles
$$
\rightarrow j_*j^*[-1]\rightarrow\psi_f[-1]\xrightarrow{\can}\phi_f[-1]\xrightarrow{[1]}
$$
and
$$
\rightarrow\phi_f[-1]\xrightarrow{\var}\psi_f[-1]\rightarrow j_!j^![1]\xrightarrow{[1]}.
$$

\bigskip

Recall the natural transformations $\alpha$ and $\beta$ from the previous section. By naturality, we immediately conclude:

\bigskip

\noindent Let $\Adot\in D^b_c(X)$. As $\psi_f[-1]\Adot$ depends only on $i^*\Adot$ (and $f$), there are trivial natural isomorphisms
$$
\psi_f[-1]\xrightarrow[\cong]{\pi}\psi_f[-1]i_!i^!\hskip 0.2in\textnormal{ and }\hskip 0.2in \psi_f[-1]i_*i^*\xrightarrow[\cong]{\rho}\psi_f[-1]
$$
given by $\pi_{\Adot}=(\psi_f[-1](\beta_{\Adot}))^{-1}$ and $\rho_{\Adot}=(\psi_f[-1](\alpha_{\Adot}))^{-1}$.

\medskip

 Furthermore, as $j_*j^*[-1]i_!i^!=0$ and $j_!j^![1]i_*i^*=0$, we have natural isomorphisms
$$
\psi_f[-1]i_!i^!\xrightarrow[\cong]{\can_{i_!i^!}}\phi_f[-1]i_!i^!\hskip 0.2in\textnormal{ and }\hskip 0.2in \phi_f[-1]i_*i^*\xrightarrow[\cong]{\var_{i_*i^*}}\psi_f[-1]i_*i^*.
$$

\bigskip

\noindent We let 
$$\sigma_{\Adot}:=\can_{i_!i^!\Adot}\circ\pi_{\Adot}\hskip 0.2in\textnormal{ and }\hskip 0.2in \tau_{\Adot}:=\rho_{\Adot}\circ\var_{i_*i^*\Adot},$$ so that $\sigma$ is a natural isomorphism from $\psi_f[-1]$ to $\phi_f[-1]i_!i^!$ and $\tau$ is a natural isomorphism from $\phi_f[-1]i_*i$ to $\psi_f[-1]$.

\bigskip

\begin{lem}\label{lem:2squares} The following diagrams commute:

$$
\begin{CD}
\phi_f[-1]\Adot @>\phi_f[-1](\alpha_{\Adot})>>\phi_f[-1]i_*i^*\Adot \\
@VV\var_{\Adot}V    @VV\var_{i_*i^*\Adot}V\\
\psi_f[-1]\Adot @>\rho_{\Adot}^{-1}=\psi_f[-1](\alpha_{\Adot})>>\psi_f[-1]i_*i^*\Adot
\end{CD}
$$
and

$$
\begin{CD}
\psi_f[-1]i_!i^!\Adot @>\pi_{\Adot}^{-1}=\psi_f[-1](\beta_{\Adot})>>\psi_f[-1]\Adot \\
@VV\can_{i_!i^!\Adot}V    @VV\can_{\Adot}V\\
\phi_f[-1]i_!i^!\Adot @>\phi_f[-1](\beta_{\Adot})>>\phi_f[-1]\Adot,
\end{CD}
$$

\bigskip

\noindent where $\can_{i_!i^!\Adot}$, $\var_{i_*i^*\Adot}$, $\psi_f[-1](\alpha_{\Adot})$, and $\psi_f[-1](\beta_{\Adot})$ are isomorphisms.

\medskip

Therefore, 
$$\var_{\Adot}=\rho_{\Adot}\circ \var_{i_*i^*\Adot} \circ\phi_f[-1](\alpha_{\Adot})=\tau_{\Adot}\circ \phi_f[-1](\alpha_{\Adot})$$ 
and 
$$
\can_{\Adot} = \phi_f[-1](\beta_{\Adot})\circ \can_{i_!i^!\Adot}\circ \pi_{\Adot} =\phi_f[-1](\beta_{\Adot})\circ\sigma_{\Adot} .
$$
\end{lem}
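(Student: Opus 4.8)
The plan is to derive the entire statement from the naturality of $\can$ and $\var$, together with the facts already recorded in the paragraphs immediately preceding the statement.

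First, the two displayed squares. The first square is nothing but the naturality square of the natural transformation $\var\colon\phi_f[-1]\Rightarrow\psi_f[-1]$ evaluated at the morphism $\alpha_{\Adot}\colon\Adot\to i_*i^*\Adot$; the identification of its bottom arrow with $\rho_{\Adot}^{-1}$ is the definition of $\rho$. Dually, the second square is the naturality square of $\can\colon\psi_f[-1]\Rightarrow\phi_f[-1]$ evaluated at $\beta_{\Adot}\colon i_!i^!\Adot\to\Adot$, its top arrow being $\pi_{\Adot}^{-1}=\psi_f[-1](\beta_{\Adot})$ by definition of $\pi$. So both squares commute for purely formal reasons, and nothing beyond naturality is used.

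Second, the four ``isomorphism'' assertions are exactly what was observed just before the statement: $\psi_f[-1](\alpha_{\Adot})$ and $\psi_f[-1](\beta_{\Adot})$ are isomorphisms because $\psi_f[-1]$ depends only on $i^*(-)$ while $i^*(\alpha_{\Adot})$ and $i^*(\beta_{\Adot})$ are isomorphisms (the triangle identities for the open inclusion $i$, together with $i^!i_!\cong\operatorname{id}\cong i^*i_*$); and $\can_{i_!i^!\Adot}$, $\var_{i_*i^*\Adot}$ are isomorphisms because in the two standard triangles the respective third terms $j_*j^*[-1]i_!i^!\Adot$ and $j_!j^![1]i_*i^*\Adot$ vanish, since $j^*i_!=0$ and $j^!i_*=0$ (the latter by adjunction, using $i^*j_*=0$, $j_!=j_*$, and $j^!j_*\cong\operatorname{id}$). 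One may simply cite the discussion above for these.

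Finally, the two ``Therefore'' formulas are obtained by rearranging the commuting squares. From the first square, $\var_{i_*i^*\Adot}\circ\phi_f[-1](\alpha_{\Adot})=\psi_f[-1](\alpha_{\Adot})\circ\var_{\Adot}=\rho_{\Adot}^{-1}\circ\var_{\Adot}$; composing on the left with the isomorphism $\rho_{\Adot}$ and recalling $\tau_{\Adot}=\rho_{\Adot}\circ\var_{i_*i^*\Adot}$ yields $\var_{\Adot}=\rho_{\Adot}\circ\var_{i_*i^*\Adot}\circ\phi_f[-1](\alpha_{\Adot})=\tau_{\Adot}\circ\phi_f[-1](\alpha_{\Adot})$. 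Symmetrically, from the second square $\phi_f[-1](\beta_{\Adot})\circ\can_{i_!i^!\Adot}=\can_{\Adot}\circ\psi_f[-1](\beta_{\Adot})=\can_{\Adot}\circ\pi_{\Adot}^{-1}$; composing on the right with $\pi_{\Adot}$ and recalling $\sigma_{\Adot}=\can_{i_!i^!\Adot}\circ\pi_{\Adot}$ yields $\can_{\Adot}=\phi_f[-1](\beta_{\Adot})\circ\can_{i_!i^!\Adot}\circ\pi_{\Adot}=\phi_f[-1](\beta_{\Adot})\circ\sigma_{\Adot}$. There is no real obstacle here: the lemma is pure diagram bookkeeping on top of naturality. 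The only points deserving a moment's care are invoking the correct adjunction triangle identities for the iso claims (that $i^*(\beta_{\Adot})$ is invertible uses the $(i_!,i^!)$-adjunction and $i^!i_!\cong\operatorname{id}$ for $i$ open) and making sure $\psi_f[-1]$ genuinely factors through $i^*$, so that it carries $i^*$-isomorphisms to isomorphisms.
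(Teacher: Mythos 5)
Your proposal is correct and takes essentially the same approach as the paper, whose entire proof is that the squares are immediate from the naturality of $\can$ and $\var$; your additional details (the identification of the horizontal arrows via the definitions of $\rho$ and $\pi$, the vanishing $j_*j^*[-1]i_!i^!=0$ and $j_!j^![1]i_*i^*=0$ for the isomorphism claims, and the rearrangement yielding the two formulas) just make explicit what the paper leaves to the preceding discussion.
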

\begin{proof} These are immediate from the naturality of $\can$ and $\var$.
\end{proof}

\vskip 0.2in

Recall the definition of $\gamma$ from \lemref{lem:abd}. In \cite{natcommute}, the natural isomorphism $\delta$ is defined in terms of $\epsilon$ by, for all $\Adot\in D^b_c(X)$,
$$
\delta_{\Adot} =\vdual(\tau^{-1}_{\Adot})\circ\epsilon_{i_*i^*\Adot}\circ\phi_f[-1](\gamma^{-1}_{\Adot})\circ\sigma_{\vdual\Adot}.
$$

\medskip

With this definition, it is easy to show:

\medskip

\begin{lem}\label{lem:2moresquares} The following diagram commutes:
$$
\begin{CD}
\psi_f[-1]\vdual\Adot@>\phi_f[-1](\gamma_{\Adot}^{-1})\circ\sigma_{\vdual\Adot}>>\phi_f[-1]\vdual i_*i^*\Adot @>\phi_f[-1](\vdual(\alpha_{\Adot}))>>\phi_f[-1]\vdual\Adot\\
@VV\delta_{\Adot}V @VV\epsilon_{i_*i^*\Adot}V  @VV\epsilon_{\Adot}V\\
\vdual\psi_f[-1]\Adot@>\vdual(\tau_{\Adot})>>\vdual\phi_f[-1]i_*i^*\Adot @>\vdual(\phi_f[-1]\alpha_{\Adot})>>\vdual\phi_f[-1]\Adot .
\end{CD}
$$
\end{lem}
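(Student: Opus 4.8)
The plan is to split the rectangle into its left and right squares and check each one separately; given the formula for $\delta_{\Adot}$ displayed just above the lemma, both are formal.

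For the right-hand square
$$
\begin{CD}
\phi_f[-1]\vdual i_*i^*\Adot @>\phi_f[-1](\vdual(\alpha_{\Adot}))>> \phi_f[-1]\vdual\Adot\\
@VV\epsilon_{i_*i^*\Adot}V @VV\epsilon_{\Adot}V\\
\vdual\phi_f[-1]i_*i^*\Adot @>\vdual(\phi_f[-1]\alpha_{\Adot})>> \vdual\phi_f[-1]\Adot
\end{CD}
$$
I would observe that this is nothing but the naturality square of the natural isomorphism $\epsilon$ of \eqref{eq:vandual}, evaluated at the morphism $\alpha_{\Adot}\colon\Adot\to i_*i^*\Adot$. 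Since $\epsilon$ is a natural transformation between the contravariant functors $\phi_f[-1]\circ\vdual$ and $\vdual\circ\phi_f[-1]$, applying each of these two functors to $\alpha_{\Adot}$ and invoking naturality produces exactly the square above; the only thing to notice is that, because $\vdual$ is contravariant, both horizontal arrows run from the $i_*i^*\Adot$-term to the $\Adot$-term, as drawn.

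For the left-hand square I would just unwind the definition
$$
\delta_{\Adot}=\vdual(\tau^{-1}_{\Adot})\circ\epsilon_{i_*i^*\Adot}\circ\phi_f[-1](\gamma^{-1}_{\Adot})\circ\sigma_{\vdual\Adot}.
$$
Composing on the left with $\vdual(\tau_{\Adot})$ and using that $\vdual$ is a contravariant functor gives $\vdual(\tau_{\Adot})\circ\vdual(\tau^{-1}_{\Adot})=\vdual(\tau^{-1}_{\Adot}\circ\tau_{\Adot})=\operatorname{id}$ (legitimate since $\tau_{\Adot}$ is an isomorphism), hence
$$
\vdual(\tau_{\Adot})\circ\delta_{\Adot}=\epsilon_{i_*i^*\Adot}\circ\phi_f[-1](\gamma^{-1}_{\Adot})\circ\sigma_{\vdual\Adot},
$$
and the right-hand side is precisely the composite, around the left square, of the top arrow $\phi_f[-1](\gamma^{-1}_{\Adot})\circ\sigma_{\vdual\Adot}$ followed by the middle vertical $\epsilon_{i_*i^*\Adot}$. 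That is the commutativity of the left square. Pasting the two commuting squares yields the full rectangle, which proves the lemma.

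The only real work is keeping the variances and arrow directions straight, since $\vdual$ is contravariant whereas $\phi_f[-1]$ and $\psi_f[-1]$ are covariant; I do not expect a genuine obstacle, as the lemma has been arranged so that the left square is exactly the definition of $\delta$ rewritten, and the right square is bare naturality of $\epsilon$.
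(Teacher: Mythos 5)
Your proposal is correct and is exactly the paper's argument: the left square is the definition of $\delta$ rearranged (compose with $\vdual(\tau_{\Adot})$ and cancel $\vdual(\tau_{\Adot})\circ\vdual(\tau^{-1}_{\Adot})=\operatorname{id}$), and the right square is the naturality square of $\epsilon$ at the morphism $\alpha_{\Adot}\colon\Adot\to i_*i^*\Adot$. The paper's proof is a one-line version of the same; your care with the contravariance of $\vdual$ and the resulting arrow directions is exactly the right thing to check.
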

\begin{proof} The square on the left commutes by the definition of $\delta$ and the square on the right commutes by the naturality of $\epsilon$.
\end{proof}

\bigskip

Finally, with all of our lemmas, it is simple to prove:
\begin{thm}\label{thm:main}
The following diagram commutes:
$$
\begin{CD}
\psi_f[-1]\vdual\Adot@>\can_{\vdual\Adot}>>\phi_f[-1]\vdual\Adot\\
@VV\delta_{\Adot}V  @VV\epsilon_{\Adot}V\\
\vdual\psi_f[-1]\Adot @>\vdual(\var_{\Adot})>>\vdual\phi_f[-1]\Adot,
\end{CD}
$$

\bigskip

\noindent i.e., $\can_{\vdual\Adot}=\epsilon_{\Adot}^{-1}\circ\vdual(\var_{\Adot})\circ\delta_{\Adot}$.
\end{thm}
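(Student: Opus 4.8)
The plan is to obtain the square of \thmref{thm:main} as the outer rectangle of the commuting diagram of \lemref{lem:2moresquares}. Equivalently, starting from the form $\can_{\vdual\Adot}=\epsilon_{\Adot}^{-1}\circ\vdual(\var_{\Adot})\circ\delta_{\Adot}$, I would rewrite the right-hand side step by step using the three lemmas already in place, and check that it collapses to $\can_{\vdual\Adot}$.

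First, the ``$\var$'' conclusion of \lemref{lem:2squares} gives $\var_{\Adot}=\tau_{\Adot}\circ\phi_f[-1](\alpha_{\Adot})$, so, since $\vdual$ is contravariant, $\vdual(\var_{\Adot})=\vdual(\phi_f[-1](\alpha_{\Adot}))\circ\vdual(\tau_{\Adot})$. Substituting the definition
$$\delta_{\Adot}=\vdual(\tau^{-1}_{\Adot})\circ\epsilon_{i_*i^*\Adot}\circ\phi_f[-1](\gamma^{-1}_{\Adot})\circ\sigma_{\vdual\Adot}$$
and cancelling $\vdual(\tau_{\Adot})\circ\vdual(\tau^{-1}_{\Adot})=\operatorname{id}$ leaves $\vdual(\var_{\Adot})\circ\delta_{\Adot}=\vdual(\phi_f[-1](\alpha_{\Adot}))\circ\epsilon_{i_*i^*\Adot}\circ\phi_f[-1](\gamma^{-1}_{\Adot})\circ\sigma_{\vdual\Adot}$. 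Next, the naturality of $\epsilon$ at the morphism $\alpha_{\Adot}\colon\Adot\to i_*i^*\Adot$ — this is precisely the right-hand square of \lemref{lem:2moresquares} — moves $\epsilon$ to the outside: $\vdual(\phi_f[-1](\alpha_{\Adot}))\circ\epsilon_{i_*i^*\Adot}=\epsilon_{\Adot}\circ\phi_f[-1](\vdual(\alpha_{\Adot}))$. Then, since $\phi_f[-1]$ is a functor, the two $\phi_f[-1]$-terms combine into $\phi_f[-1]\big(\vdual(\alpha_{\Adot})\circ\gamma^{-1}_{\Adot}\big)$, and \lemref{lem:abd} replaces $\vdual(\alpha_{\Adot})\circ\gamma^{-1}_{\Adot}$ by $\beta_{\vdual\Adot}$. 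Finally, the ``$\can$'' conclusion of \lemref{lem:2squares}, applied with $\vdual\Adot$ in place of $\Adot$, identifies $\phi_f[-1](\beta_{\vdual\Adot})\circ\sigma_{\vdual\Adot}$ with $\can_{\vdual\Adot}$. Assembling the chain yields $\vdual(\var_{\Adot})\circ\delta_{\Adot}=\epsilon_{\Adot}\circ\can_{\vdual\Adot}$, which is the claim.

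There is no genuine obstacle here: the argument is a bookkeeping diagram chase splicing together pieces that are already proved, and indeed the square of \thmref{thm:main} is literally the boundary rectangle of the diagram of \lemref{lem:2moresquares} once its top composite is recognized as $\can_{\vdual\Adot}$ (via \lemref{lem:abd} and the ``$\can$'' half of \lemref{lem:2squares}) and its bottom composite as $\vdual(\var_{\Adot})$ (via the ``$\var$'' half of \lemref{lem:2squares}). The only points requiring care are the contravariance of $\vdual$, which reverses the order of composition at every substitution, and the fact that $\alpha$ is itself the unit of a natural transformation, so that invoking naturality of $\epsilon$ ``at $\alpha_{\Adot}$'' must be carried out over the correct object, namely $\Adot$ with target $i_*i^*\Adot$. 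All the conceptual content — the construction of $\gamma$ together with its compatibility with $\vdual$, the auxiliary isomorphisms $\pi,\rho,\sigma,\tau$, and the definition of $\delta$ itself — has been front-loaded into \lemref{lem:abd}, \lemref{lem:2squares}, and \lemref{lem:2moresquares}, so that \thmref{thm:main} then follows by inspection.
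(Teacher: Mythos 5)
Your proposal is correct and is essentially the paper's own proof: both read the theorem off as the outer rectangle of the diagram in \lemref{lem:2moresquares}, identifying the top composite with $\can_{\vdual\Adot}$ via \lemref{lem:abd} and the ``$\can$'' half of \lemref{lem:2squares}, and the bottom composite with $\vdual(\var_{\Adot})$ via the ``$\var$'' half of \lemref{lem:2squares}. The algebraic rewriting you carry out (cancelling $\vdual(\tau_{\Adot})\circ\vdual(\tau_{\Adot}^{-1})$ and invoking naturality of $\epsilon$ at $\alpha_{\Adot}$) is just the same diagram chase written out linearly.
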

\begin{proof} This follows from the outside edges of the diagram in \lemref{lem:2moresquares}. The composition along the bottom row, combined with \lemref{lem:2squares}, yields
$$
\vdual(\var_{\Adot})=\vdual\big(\tau_{\Adot}\circ \phi_f[-1](\alpha_{\Adot})\big)=\vdual(\phi_f[-1](\alpha_{\Adot}))\circ\vdual(\tau_{\Adot}).
$$

Composition along the top row yields
$$
\phi_f[-1](\vdual(\alpha_{\Adot}))\circ \phi_f[-1](\gamma_{\Adot}^{-1})\circ\sigma_{\vdual\Adot}=\phi_f[-1]\big(\vdual(\alpha_{\Adot})\circ\gamma^{-1}_{\Adot}\big)\circ\sigma_{\vdual\Adot},
$$
which, by \lemref{lem:abd} and \lemref{lem:2squares}, is equal to
$$
\phi_f[-1](\beta_{\vdual\Adot})\circ\sigma_{\vdual\Adot}=\can_{\vdual\Adot}.
$$
\end{proof}

\bigskip

\begin{rem}\label{rem:formula} Note that the equality in \thmref{thm:main} immediately yields a formula for $\var_{\vdual\Adot}$ (using twice that $\vdual\vdual$ is naturally isomorphic to the identity). One applies the equality in \thmref{thm:main} to $\vdual\Adot$ in place of $\Adot$ and then dualizes both sides to obtain:

$$
\can_{\vdual\Adot}=\epsilon_{\Adot}^{-1}\circ\vdual(\var_{\Adot})\circ\delta_{\Adot}.
$$

$$
\can_{\Adot}=\epsilon_{\vdual\Adot}^{-1}\circ\vdual(\var_{\vdual\Adot})\circ\delta_{\vdual\Adot}.
$$

$$
\vdual(\can_{\Adot})=\vdual(\delta_{\vdual\Adot})\circ\var_{\vdual\Adot}\circ \vdual(\epsilon_{\vdual\Adot}^{-1}).
$$

$$
\var_{\vdual\Adot}= \vdual(\delta^{-1}_{\vdual\Adot})\circ\vdual(\can_{\Adot})\circ \vdual(\epsilon_{\vdual\Adot}).
$$
\end{rem}

\medskip

\section{A non-commuting diagram}

\medskip

In \thmref{thm:main}, we proved that the right-hand square of the following diagram commutes:

\smallskip

\begin{equation}\label{eq:notcomm}
\begin{CD}
\phi_f[-1]\vdual\Adot @>\var_{\vdual\Adot}>>\psi_f[-1]\vdual\Adot@>\can_{\vdual\Adot}>>\phi_f[-1]\vdual\Adot\\
@VV\epsilon_{\Adot}V  @VV\delta_{\Adot}V  @VV\epsilon_{\Adot}V\\
\vdual\phi_f[-1]\Adot @>\vdual(\can_{\Adot})>>\vdual\psi_f[-1]\Adot @>\vdual(\var_{\Adot})>>\vdual\phi_f[-1]
\end{CD}
\end{equation}

\bigskip

\noindent One might hope that, in an analogous way, we could prove that the left-hand side also commutes. This is not the case; {\bf the left-hand side does not commute}. The fact that not both of the squares above can commute was pointed out to us by J\"org Sch\"urmann. We explain this now.

\bigskip

From the construction of $\epsilon$, it is relatively simple to show that there is a commutative diagram

$$
\begin{CD}
\phi_f[-1]\vdual\Adot @>(\widetilde T_f^{-1})_{\vdual\Adot}>>\phi_f[-1]\vdual\Adot \\
@VV\epsilon_{\Adot}V    @VV\epsilon_{\Adot}V\\
\vdual\phi_f[-1]\Adot @>\vdual((\widetilde T_f)_{\Adot})>>\vdual\phi_f[-1]\Adot .
\end{CD}
$$

\bigskip

From this, it follows that, {\bf if \eqref{eq:notcomm} were commutative}, then the outside edges would tell us that
$$
\big({\operatorname{id}}-\widetilde T_f\big)_{\vdual\Adot} = \can_{\vdual\Adot}\circ\var_{\vdual\Adot}=\epsilon^{-1}_{\Adot}\circ\vdual(\can_{\Adot}\circ\var_{\Adot})\circ\epsilon_{\Adot} =
$$
$$
\epsilon^{-1}_{\Adot}\circ\vdual\big(\big({\operatorname{id}}-\widetilde T_f\big)_{\Adot}\big)\circ\epsilon_{\Adot}={\operatorname{id}}_{\vdual\Adot}-\epsilon^{-1}_{\Adot}\circ\vdual\big((\widetilde T_f)_{\Adot}\big)\circ\epsilon_{\Adot}=
$$
$$
{\operatorname{id}}_{\vdual\Adot}-(\widetilde T_f^{-1})_{\vdual\Adot}
$$

\medskip

\noindent As $\widetilde T_f^{-1}$ is, in general, not equal to $\widetilde T_f$, we conclude that \eqref{eq:notcomm} does not commute.

\bigskip

In fact, it is easy to see that it was our choice of the isomorphism $\delta$  which made the right side of \eqref{eq:notcomm} commute and not the left. If, however, we define the natural isomorphism
$$
\hat\delta_{\Adot}:= \vdual(\sigma_{\Adot})\circ\epsilon_{i_!i^!\Adot}\circ\phi_f[-1](\vdual(\gamma^{-1}_{\vdual\Adot}))\circ\tau^{-1}_{\vdual\Adot},
$$
then the analogous argument to what we used to prove \thmref{thm:main} shows that we have a commutative diagram
$$
\begin{CD}
\phi_f[-1]\vdual\Adot @>\var_{\vdual\Adot}>>\psi_f[-1]\vdual\Adot\\
 @VV\epsilon_{\Adot}V  @VV\hat\delta_{\Adot}V\\
\vdual\phi_f[-1]\Adot @>\vdual(\can_{\Adot})>>\vdual\psi_f[-1]\Adot .
\end{CD}
$$

\medskip

Note that this gives a different formula for $\var_{\vdual\Adot}$ than what we found at the end of \remref{rem:formula}. We have not found a direct way of passing from one formula to the other; we just know that they both equal $\var_{\vdual\Adot}$.

\medskip

\bibliographystyle{plain}

\bibliography{Masseybib}

\end{document}